\theoremstyle{plain}
\newtheorem{thm}{Theorem}[section]
\newtheorem{prop}[thm]{Proposition}
\newtheorem{lem}[thm]{Lemma}
\theoremstyle{definition}
\newtheorem{defin}[thm]{Definition}
\theoremstyle{remark}
\newtheorem*{proof}{Proof}
\begin{document}

\title{Lower Bound for Convex Hull Area and Universal Cover Problems}
\author{Tirasan Khandhawit, Dimitrios Pagonakis, and Sira Sriswasdi}
\date{}
\maketitle

\begin{abstract} In this paper, we provide a lower bound for an area of the convex hull of points and a rectangle in a plane. We then apply this estimate to establish a lower bound for a universal cover problem. We showed that a convex universal cover for a unit length curve has area at least 0.232239. In addition, we show that a convex universal cover for a unit closed curve has area at least 0.0879873. 
\end{abstract}

\section{Introduction}
One of the open classical problems in discrete geometry is the Moser's Worm problem, which originally asked for ``the smallest cover for any unit-length curve''. In the other words, the question asks for a minimal universal cover for any curve of unit length -- also called unit worm. Although it is not clearly stated in the original problem, in this report, we will only concern ourselves with convex covers.

In 1979, Laidacker and Poole \cite{popo} proved that such minimal cover exists. However, finding
this minimal cover turns out to be much more difficult. Instead, there have been attempts trying to estimate the area of this minimal cover. In 1974, Gerriets and Poole \cite{scho} constructed a rhombus-shaped cover with an area of 0.28870, thus establishing the first upper bound for the problem. Later, Norwood and Poole \cite{tree} improved the upper bound to 0.2738, while Wetzel \cite{pi} conjectured the upper bound of 0.2345. 

On the other hand, the lower bound for the problem has not been as extensively studied. In
1973, Wetzel \cite{cam} showed that any cover has an area at least 0.2194, exploiting the fact that such cover must contain both a unit segment and the \textit{broadworm} \cite{mansur}. This lower bound has only recently improved to 0.227498 \cite{maco}, using the following facts:

\begin{itemize}
\item Any convex universal cover must contain a unit segment, an
  equilateral triangle of side length $\frac{1}{2}$ and a square of side length $\frac{1}{3}$.
\item The minimum area of the convex hull of these three objects provide a lower bound for Moser's Worm Problem.
\end{itemize}

In this paper, we generalize these ideas by considering an arbitrary rectangle instead of the square. In Section 2, we provide a lower bound for the convex hull area of a set of points and a rectangle, and then apply the technique to a universal cover problem. As a result, in Section 3, we improve the lower bound for the Moser's problem to 0.232239. In Section 4, we consider a variation of the Moser's problem: we try to estimate an area of a universal cover for any unit closed curve. Only partial results were established by Wetzel in 1973 when he showed that a translational cover for any unit closed curve must have area between 0.155 and 0.159 \cite{sec}. We are able to show that a convex universal cover must have area at least 0.0879873.

\section{Estimating area of the convex hull of points and rectangles}

Let $\mathcal{P}$ be a polygon with vertices $K_1,\, \dots,\, K_n$, we denote by 
$\mu(\mathcal{P})=\mu(K_1,\, \dots,\, K_n)$ the area of the convex hull of $\mathcal{P}$. We also denote by $\mu(\mathcal{P}_1,\,\mathcal{P}_2,\,\dots,\,\mathcal{P}_m)$ the area of the convex hull of $\mathcal{P}_1 \cup \dots \cup \mathcal{P}_m$, where $\mathcal{P}_i$ are sets in a plane. Next, we define 

\begin{defin}
Given segments $AB$ and $DC$, the \emph{height} of $AB$ with respect
to $DC$ is the length of the perpendicular segment from either $A$ or $B$ to the
parallel of $DC$ passing another point. We denote this
height by $h_{DC}(AB)$ (Figure \ref{Example}).
\end{defin} 

\begin{figure}[htbp]
\begin{center}
	\includegraphics[height = 1.5in]{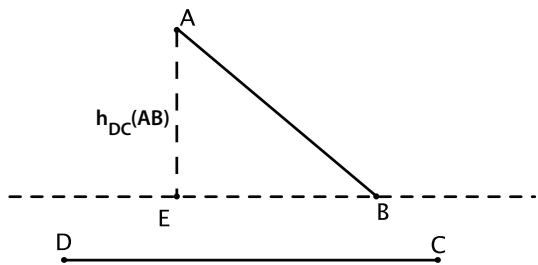}
	\caption{The height of $AB$ with respect to $DC$ is $h_{DC}(AB)$.}
	\label{Example}
\end{center}
\end{figure}

First, we provide a lower bound for the convex hull area of any four points on a plane.

\begin{lem}
Let $E$, $F$, $P$, $Q$ be points in $\mathbb{R}^2$. Then
$\mu(EFPQ)\geq\frac{1}{2}|EF|\,h_{EF}(PQ)$.
\label{R}
\end{lem}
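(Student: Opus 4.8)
The plan is to reduce the claim to comparing the area of the convex hull with the areas of two triangles that share the common base $EF$. First I would fix coordinates so that the line through $E$ and $F$ is the $x$-axis, writing $E=(e,0)$, $F=(f,0)$, $P=(x_P,y_P)$, and $Q=(x_Q,y_Q)$. Then $|EF|=|f-e|$, and by the definition of height $h_{EF}(PQ)=|y_P-y_Q|$, since the line through $Q$ parallel to $EF$ is the horizontal line $y=y_Q$ and $P$ lies at vertical distance $|y_P-y_Q|$ from it. The goal thus becomes $\mu(EFPQ)\ge\frac12|f-e|\,|y_P-y_Q|$.

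The key observation is that the convex hull of the four points, being convex and containing $E$, $F$, $P$, $Q$, contains both triangles $EFP$ and $EFQ$. Computed with $EF$ as base, their areas are $\frac12|f-e|\,|y_P|$ and $\frac12|f-e|\,|y_Q|$. I would then split into two cases according to the sign of $y_Py_Q$, that is, whether $P$ and $Q$ lie on the same side of the line $EF$ or on opposite sides.

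In the opposite-side case ($y_Py_Q\le 0$) the two triangles meet only along $EF$, so their areas add inside the hull, giving $\mu(EFPQ)\ge\frac12|f-e|(|y_P|+|y_Q|)=\frac12|f-e|\,|y_P-y_Q|$, because $|y_P|+|y_Q|=|y_P-y_Q|$ when the coordinates have opposite signs. In the same-side case I would instead bound the hull below by the single larger triangle, $\mu(EFPQ)\ge\frac12|f-e|\max(|y_P|,|y_Q|)$, and finish with the elementary inequality $\max(|y_P|,|y_Q|)\ge\big||y_P|-|y_Q|\big|=|y_P-y_Q|$, the last equality being valid here because $y_P$ and $y_Q$ share a sign. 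Both cases yield the stated bound.

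I expect no serious obstacle, since the argument is essentially a base-times-height estimate. The only point requiring care is the case split together with the observation that on opposite sides the heights add (so the looser single-triangle bound would not suffice), whereas on the same side they must be subtracted; degenerate configurations such as coincident points or $P,Q$ lying on the line $EF$ are handled automatically, as the inequalities remain valid when the relevant triangle areas vanish.
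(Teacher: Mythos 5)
Your proof is correct and follows essentially the same route as the paper: both arguments place $EF$ on a horizontal axis, identify $h_{EF}(PQ)$ with the vertical gap $|y_P-y_Q|$, and split into the same-side case (bound by the single taller triangle on base $EF$) and the opposite-side case (the two triangles on base $EF$ meet only along $EF$, so their areas add). No gaps; the case analysis and the base-times-height estimates match the paper's proof of Lemma \ref{R}.
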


\begin{proof}
Without loss of generality, we can rotate the
figure so that $EF$ is horizontal and $P$ is above $EF$. We can also
relabel points to ensure that $P$ is above $Q$, as well as, $E$ and $F$.
Let $d_1$ be the distance from the point $P$ to $EF$, and $d_2$ the distance from $Q$ to $EF$.
Note that the convex hull of $EFPQ$ always contain triangle $EFP$ and so $\mu(EFPQ)\geq\mu(EFP)$.
If $Q$ also lies above $EF$, then it is clear that $d_1\geq h_{EF}(PQ)$ (Figure \ref{lemmacase2}) and we have $\mu(EFPQ)\geq\mu(EPF) = \frac{1}{2}|EF| d_1\geq \frac{1}{2}|EF|\, h_{EF}(PQ)$.

\begin{figure}[htbp]
\begin{center}
	\includegraphics[height = 2in]{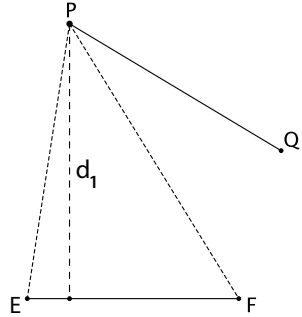}
	\caption{The case where both $P$ and $Q$ lie above $EF$.}
	\label{lemmacase2}
\end{center}
\end{figure}

Otherwise, if $Q$ lies below $EF$, we notice that $EFPQ$ contains both triangles $EPF$ and $EQF$, and the two triangles do not intersect except on $EF$. Hence, $\mu(EFPQ)\geq\mu(EPF)+\mu(EQF)=\frac{1}{2}|EF|\,(d_1+d_2)=\frac{1}{2}|EF|\,h_{EF}(PQ)$, and the equality holds when $EFPQ$ is convex (Figure \ref{lemmacase1}).

\begin{figure}[htbp]
\begin{center}
	\includegraphics[height = 2in]{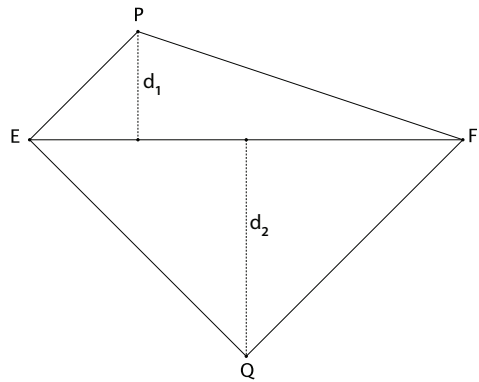}
	\caption{The case where $EF$ lies between $P$ and $Q$.}
	\label{lemmacase1}
\end{center}
\end{figure}

\begin{flushright}
$ \Box $
\end{flushright}

\end{proof}

The next proposition provides a lower bound for the convex hull area of a rectangle and four arbitrary points on the same plane.

\begin{prop}
Let $ABCD$ be a rectangle and $E,\, F,\, P,\, Q$ be points on the same plane. Assume
that $h_{BC}(EF)>|AB|$ and $h_{AB}(PQ)>|BC|$. Then 
\begin{equation}
\mu(ABCDEFPQ)\geq \frac{1}{2}(h_{BC}(EF)-|AB|)\,|BC|+\frac{1}{2}(h_{AB}(PQ)-|BC|)\,|AB|+|AB||BC|
\label{main}
\end{equation}

\end{prop}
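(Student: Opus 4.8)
The plan is to realize the convex hull $\mathcal H=\mathrm{conv}(ABCDEFPQ)$ as containing the rectangle together with four disjoint ``wing'' triangles, one hanging off each side, and to bound the wing areas by elementary base--height estimates in the spirit of Lemma \ref{R}. First I would fix coordinates, writing $ABCD$ as $[0,a]\times[0,b]$ with $A=(0,0)$, $B=(a,0)$, $C=(a,b)$, $D=(0,b)$, so that $a=|AB|$ and $b=|BC|$. The hypothesis $h_{BC}(EF)>|AB|$ then says exactly that the horizontal extent $H_1:=h_{BC}(EF)=|x_E-x_F|$ of $EF$ exceeds $a$, while $h_{AB}(PQ)>|BC|$ says the vertical extent $H_2:=h_{AB}(PQ)=|y_P-y_Q|$ of $PQ$ exceeds $b$. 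Expanding the right-hand side of \eqref{main} shows it equals $ab+\tfrac12 b(H_1-a)+\tfrac12 a(H_2-b)=\tfrac12 bH_1+\tfrac12 aH_2$, so I am aiming to account for the rectangle once, a horizontal contribution $\tfrac12 b(H_1-a)$, and a vertical contribution $\tfrac12 a(H_2-b)$.

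For the horizontal contribution I would work in the two half-planes $\{x\le 0\}$ and $\{x\ge a\}$ cut out by the left and right sides. Relabel so that $x_E\le x_F$. The triangle $EDA$ lies entirely in $\{x\le 0\}$ whenever $x_E\le 0$ and has area $\tfrac12 b\,(-x_E)$ (base $DA$ of length $b$, height $|x_E|$); likewise $FBC\subseteq\{x\ge a\}$ has area $\tfrac12 b\,(x_F-a)$ when $x_F\ge a$. A short case check on the positions of $x_E,x_F$ relative to $[0,a]$, using $x_F-x_E=H_1>a$, then yields $\mathrm{area}(\mathcal H\cap\{x\le0\})+\mathrm{area}(\mathcal H\cap\{x\ge a\})\ge\tfrac12 b(H_1-a)$ in every case. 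A symmetric argument with the top and bottom sides and the points $P,Q$ gives $\mathrm{area}(\mathcal H\cap\{y\le0\})+\mathrm{area}(\mathcal H\cap\{y\ge b\})\ge\tfrac12 a(H_2-b)$.

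The assembly step is where the real work lies. In the principal configuration, with $E$ to the left and $F$ to the right of the rectangle, both ordinates in $[0,b]$, and $P,Q$ above and below with both abscissae in $[0,a]$, the left and right wings sit in the edge strips $\{x\le0,\,0\le y\le b\}$ and $\{x\ge a,\,0\le y\le b\}$, the top and bottom wings sit in $\{0\le x\le a,\,y\ge b\}$ and $\{0\le x\le a,\,y\le 0\}$, and all four are disjoint from one another and from the rectangle; adding the five areas reproduces the bound exactly, which also shows the estimate is sharp. The obstacle is that a point lying in a \emph{corner} region (say $x_E<0$ and $y_E>b$) makes its wing triangle spill out of its edge strip into the corner, where it can overlap a wing coming from $P$ or $Q$; then the two half-plane estimates above both charge that corner, and naively adding them double-counts it. I expect this to be the main difficulty, and I would handle it by first using the reflections across $x=a/2$ and $y=b/2$, together with the relabelings $E\leftrightarrow F$ and $P\leftrightarrow Q$, to normalize which corners can be occupied, and then a case analysis showing that whenever a wing leaks into a corner the hull is forced to acquire compensating area there, so that the total never drops below $ab+\tfrac12 b(H_1-a)+\tfrac12 a(H_2-b)$. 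The conditions $H_1>a$ and $H_2>b$ are used throughout to guarantee that the wings are genuinely present.
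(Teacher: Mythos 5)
Your skeleton is the same as the paper's: the hull contains the rectangle plus four ``wing'' triangles, one erected on each side toward $E$, $F$, $P$, $Q$ respectively; reflections across the two perpendicular bisectors normalize the configuration; and the right-hand side of \eqref{main} is exactly $\tfrac12|BC|\,h_{BC}(EF)+\tfrac12|AB|\,h_{AB}(PQ)$, the sum of the five areas in the disjoint case. Your individual slab estimates are also correct, and the case check on the positions of $x_E,x_F$ relative to $[0,a]$ (one wing absorbing the whole height when both points lie on the same side of the rectangle) cleanly subsumes what the paper handles separately as its Cases 2--4. So far this is a faithful, slightly tidier reorganization of the paper's argument.

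The genuine gap is precisely the step you flag as ``where the real work lies'' and then do not carry out. Adding the bounds for $\mathcal H\cap\{x\le0\}$, $\mathcal H\cap\{x\ge a\}$, $\mathcal H\cap\{y\le0\}$, $\mathcal H\cap\{y\ge b\}$ and the rectangle overcounts $\mathcal H$ by $\sum_{c}\mathrm{area}(\mathcal H\cap c)$ over the four corner quadrants, and when, say, $F$ and $Q$ both lie in the quadrant beyond the vertex $B$, their wings $FBC$ and $QAB$ genuinely overlap there, so the overcount is positive and is \emph{not} compensated by any slack visible in your wing-triangle lower bounds (each wing's area records only the displacement perpendicular to its own side). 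Asserting that ``the hull is forced to acquire compensating area there'' is a restatement of the problem, not a proof; this overlap case is the entire content of the proposition, and the paper spends most of its proof on it. Its resolution is concrete and worth adopting: draw through $F$ the two lines parallel to the sides meeting at the offending corner vertex; since $Q$ must lie on the far side of one of them, either $\mu(QBC)\ge\mu(FBC)$ or $\mu(FAB)\ge\mu(QAB)$, so the overlapping pair $\{FBC,\,QAB\}$ may be replaced by a pair with a common apex (both at $F$ or both at $Q$), which has disjoint interiors and total area at least $\mu(FBC)+\mu(QAB)$; one also checks these substitute triangles stay clear of the wings on the other two sides. Until you supply this apex-swapping argument (or an equivalent one making the five regions honestly disjoint), your proof is incomplete at its crucial step.
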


\begin{proof}
Without loss of generality, we can assume that the slope of $AB$ is finite and non-negative. Let $\mathcal{V}$ be the strip between extension of $BC$ and $AD$ and $\mathcal{W}$ be the strip between extension of $BC$ and $AD$ (Figure \ref{strips}).

To eliminate redundant cases on the position of four points $E,\, F,\, P,\, Q$ relative to the strips $\mathcal{V}$ and $\mathcal{W}$, we note that by reflecting across the perpendicular bisector of $BC$ and re-labeling $P$ and $Q$ as necessary, we can ensure that $Q$ lies under both $\mathcal{W}$ and $P$. Specifically, if $Q$ initially lies above $\mathcal{W}$, so is $P$ and the reflection will bring both points below $\mathcal{W}$. Otherwise, if $Q$ initially lies inside $\mathcal{W}$, from the assumption that $h_{AB}(PQ)>|BC|$, $P$ must lie above $\mathcal{W}$ and the reflection and re-labeling of $P$ and $Q$ will bring $Q$ below both $\mathcal{W}$ and $P$ as desired.

Similarly, we can ensure that $E$ lies to the left of $\mathcal{V}$ by using reflection across the perpendicular bisector of $AB$ and re-labeling of points $E$ and $F$. Moreover, because the reflection around the perpendicular bisector of $AB$ does not affect the relative positions of $P$, $Q$, and  $\mathcal{W}$, and vice versa, we can obtain both conditions simultaneously. Next, we consider four cases for whether $P$ lies above $\mathcal{W}$ and whether $F$ lies to the right of $\mathcal{V}$.

\textbf{Case 1:} $P$ lies above $\mathcal{W}$ and $F$ lies on the right of $\mathcal{V}$.

\begin{figure}[htbp]
\begin{center}
	\includegraphics[height = 2in]{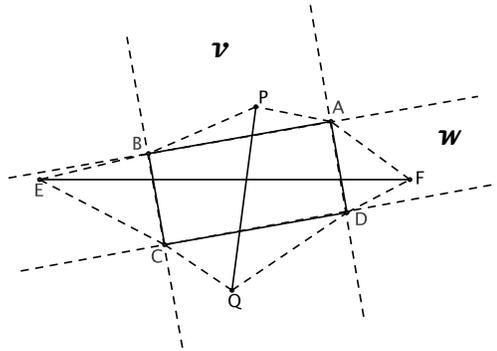}
	\caption{Case with no intersection between triangles.}
	\label{case11}
\end{center}
\end{figure}

If no pair of triangles $BEC$, $CQD$, $DFA$, $APB$ intersects (Figure \ref{case11}), then
we directly obtain the following lower bound for the convex hull area:
\begin{align*}
\mu(ABCDEFPQ) &\geq \mu(BEC) + \mu(DFA) + \mu(APB) + \mu(CQD) + \mu(ABCD) \\
							&= \frac{1}{2}|BC|\,h_1+\frac{1}{2}|AD|\,h_2+\frac{1}{2}|AB|\,g_1+\frac{1}{2}|CD|\,g_2+|AB||BC|,
\end{align*}
where $h_1$, $h_2$, $g_1$, and $g_2$, are the respective heights of triangles $BEC$,
$DFA$, $APB$, and $CQD$. This can also be written as 
\begin{align*}
\mu(ABCDEFPQ) &\geq \frac{1}{2}(h_1+h_2)\,|BC|+\frac{1}{2}(g_1+g_2)\,|AB|+|AB||BC| \\
              &= \text{R.H.S. of inequality \ref{main}}
\end{align*}
and the equality holds when $ABCDEFPQ$ is convex.

When some triangles intersect, because of the conditions on the positions of $E,\, F,\, P$, and $Q$, triangle $BEC$ cannot intersect $DFA$, triangle $APB$ cannot intersect $CQD$, and no three triangles can have nonempty intersection. We pick the case where $DFA\cap CQD\neq \emptyset$ as a representative example and subsequently show that, in general, we can always find a set of disjoint triangles lying inside the convex hull of $ABCDEFPQ$ whose total area is greater than or equal to the sum of areas of triangles $BEC$, $APB$, $DFA$, and $CQD$. 

For the two triangles $DFA$ and $CQD$ to intersect, both $F$ and $Q$ must lie below $\mathcal{W}$ and to the right of $\mathcal{V}$ (Figure \ref{caseinter}). We then draw a line $\mathcal{L}_1$ passing through $F$ parallel to $AD$ and a line $\mathcal{L}_2$ passing through $F$ parallel to $CD$. It is clear that $Q$ must lie either above $\mathcal{L}_2$ or to the right of $\mathcal{L}_1$.

\begin{figure}[htbp]
\begin{center}
	\includegraphics[height = 2in]{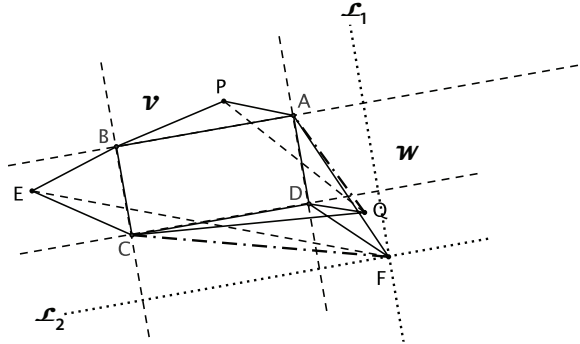}
	\caption{The case where $ADF$ and $CDQ$ intersects.}
	\label{caseinter}
\end{center}
\end{figure}

Hence, we see that either $\mu(CFD) > \mu(CQD)$ or $\mu(DQA) > \mu(DFA)$, and obtain the bound $\max\{\mu(DQA) + \mu(CQD),\ \mu(DFA) + \mu(CFD)\} \geq \mu(CQD) + \mu(DFA)$. Note that none of these triangles can intersect $BEC$, $BPC$, $APB$, or $AEB$ because $P$ lies above $\mathcal{W}$ and $E$ lies to the left of $\mathcal{V}$. If triangles $BEC$ and $APB$ also intersect, we can indenpendently apply the same consideration as above and derive a similar bound.

For the sake of simplicity, consider the case where $BEC$ and $APB$ are disjoint and compute the following lower bound for the convex hull area
\begin{eqnarray*}
\mu(ABCDEFPQ) &\geq &\mu(ABCD)+\mu(APB)+\mu(BEC) \\
							&     &+\max\{\mu(DQA) + \mu(CQD),\ \mu(DFA) + \mu(CFD)\} \\
 							&\geq &\mu(ABCD)+\mu(APB)+\mu(BEC)+\mu(CQD)+\mu(DFA) \\
							&=		&\text{R.H.S. of inequality \ref{main}}
\end{eqnarray*}
and thus inequality \ref{main} holds when $P$ lies above $\mathcal{W}$ and $F$ lies on the right of $\mathcal{V}$.

\textbf{Case 2:} $F$ lies on the right of $\mathcal{V}$ but $P$ does not lie above $\mathcal{W}$.

We now ignore triangle $APB$ and consider only $BEC$, $DFA$, and $CQD$. If $CQD$ intersect $BEC$ or $DFA$, we can use the same argument from Case 1 to show that we can always find a set of disjoint triangles lying inside the convex hull of $ABCDEFPQ$ whose total area is greater than or equal to the sum of areas of triangles $BEC$, $DFA$, and $CQD$. Moreover, since $P$ does not lie above $\mathcal{W}$, the height of $CQD$ with base $CD$ is greater than or equal to $h_{AB}(PQ) - |BC|$, and so we have
\begin{eqnarray*}
\mu(ABCDEFPQ) &\geq &\mu(ABCD)+\mu(BEC)+\mu(DFA)+\mu(CQD) \\
							&\geq &\mu(ABCD)+\mu(BEC)+\mu(DFA) \\
							&     &+\frac{1}{2}(h_{AB}(PQ) - |BC|)\,|AB| \\
							&=		&\text{R.H.S. of inequality \ref{main}}
\end{eqnarray*}

\begin{figure}[htbp]
\begin{center}
	\includegraphics[height = 2in]{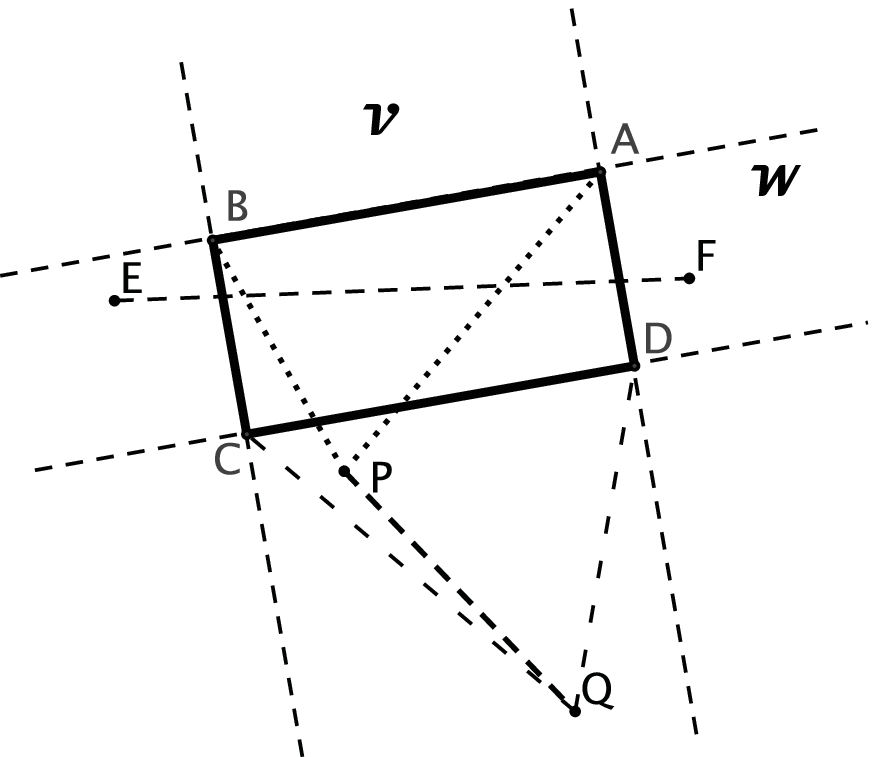}
	\caption{Case 2.}
	\label{case12}
\end{center}
\end{figure}

\textbf{Case 3:} $P$ lies above $\mathcal{W}$ but $F$ does not lie on the right of $\mathcal{V}$.

This case is analogous to Case 2. 

\textbf{Case 4:} $P$ does not above $\mathcal{W}$ and $F$ does not lie on the right of $\mathcal{V}$.

Here we conveniently consider only triangles $BEC$ and $CQD$. By similar argument from Case 1, we can resolve the case where $BEC$ and $CQD$ intersect. Using similar argument from Case 2, we also know that the height of $BEC$ is greater than or equal to $h_{BC}(EF) - |AB|$. Hence,
\begin{eqnarray*}
\mu(ABCDEFPQ) &\geq &\mu(ABCD)+\mu(BEC)+\mu(CQD) \\
							&\geq &\mu(ABCD)+\frac{1}{2}(h_{BC}(EF) - |AB|)\,|BC| \\
							&     &+\frac{1}{2}(h_{AB}(PQ) - |BC|)\,|AB| \\
							&=		&\text{R.H.S. of inequality \ref{main}}
\end{eqnarray*}
\begin{flushright}
$ \Box $
\end{flushright}
\end{proof}
\begin{figure}[htbp]
\begin{center}
	\includegraphics[height = 2in]{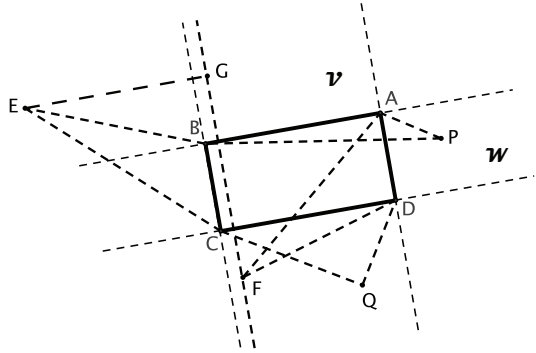}
	\caption{In Case 4 we take only $Q$ and $E$ into consideration.}
	\label{case22}
\end{center}
\end{figure}

Now that we have proved some basic results, we are ready to proceed to the main problem.

\section{Lower bound for a universal cover of curves of unit length}

\subsection{Basic Figures}
Consider a unit segment$\mathcal{L}$ with endpoints $EF$, a V-shaped unit worm $\mathcal{T}$ with vertices $QPR$ and side length $\frac{1}{2}$, and a U-shaped right angle unit worm $\mathcal{R}$ with vertices $ABCD$. To maximize the area of the convex hull of $ABCD$, we let $AB = CD = \frac{1}{2}$ and $BC = AD = \frac{1}{4}$. Another unit worm that we consider is the well known \textit{unit broadworm} \cite{broad}, denoted by $\mathcal{B}$, which was introduced by Schaer in 1968 \cite{mansur} as the broadest unit worm whose minimum width in any direction is given by $b_0$, approximately 0.4389.

We start by introducing parameters to define the positioning of $\mathcal{L}$, $\mathcal{T}$ and $\mathcal{R}$. By rotation, we can assume that $\mathcal{L}$ is horizontal. 
 
Let $O_1 = (x_1,\,y_1)$ be the centroid of the rectangle $\mathcal{R}$. We can always pick the vertex for $A$ so that $A=(x_1,\,y_1)+\frac{\sqrt{5}}{8}(\cos{\alpha},\,\sin{\alpha})$ and $\theta_0 \leq \alpha < \theta_0 + \pi$, where $\theta_0 = \arctan{\frac{1}{2}}$, and then label the rest of the vertices $B$, $C$, and $D$ going counterclockwise. Furthermore, for each configuration of $\mathcal{R}$, the value of $\alpha$ is uniquely defined and we denote it by $\alpha(\mathcal{R})$. Regarding vector as complex number, we see that
\begin{eqnarray*}
	arg\left(\overrightarrow{C A}\right) &=& \alpha \\
	arg\left(\overrightarrow{C D}\right) &=& \alpha - \theta_0.
\end{eqnarray*}

\begin{figure}[htbp]
\begin{center}
	\includegraphics[height = 2in]{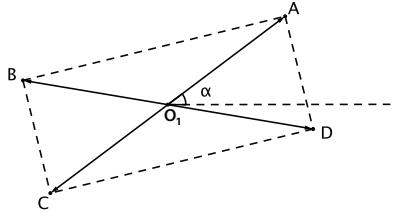}
	\caption{We construct the rectangle by rotating the vector.}
	\label{coord1}
\end{center}
\end{figure}

Let $O_2 = (x_2,y_2)$ be the centroid of the triangle $\mathcal{T}$, we can similarly pick the vertex for $P$ so that $P=(x_2,y_2)+\frac{\sqrt{3}}{6}(\cos{\beta},\sin{\beta})$ and $\frac{\pi}{6} \leq \beta < \frac{5\pi}{6}$, and then label the rest of the vertices $Q$ and $R$ going counterclockwise. Furthermore, for each configuration of $\mathcal{T}$, the value of $\beta$ is uniquely defined and we denote it by $\beta(\mathcal{T})$. Again, regarding vector as complex number, we have
\begin{eqnarray*}
	arg\left(\overrightarrow{QP}\right) &=& \beta - \frac{\pi}{6} \\
	arg\left(\overrightarrow{RP}\right) &=& \beta + \frac{\pi}{6}.
\end{eqnarray*}

Let $\sigma$ be the point reflection across the origin and $\tau$ be the reflection across the $y$-axis. Both transformations keep the segment $\mathcal{L}$ horizontal. We notice that
\begin{eqnarray*}
\alpha\left(\sigma(\mathcal{R})\right) &=& \alpha(\mathcal{R}) \\
\beta\left(\sigma(\mathcal{T})\right) &=& \begin{cases}  \beta(\mathcal{T}) + \frac{\pi}{3} \ \ \mbox{when} \ \ \frac{\pi}{6} \leq \beta(\mathcal{T}) < \frac{\pi}{2} \\
\beta(\mathcal{T}) - \frac{\pi}{3} \ \ \mbox{when} \ \ \frac{\pi}{2} \leq \beta(\mathcal{T}) < \frac{5\pi}{6} \end{cases} \\
\alpha\left(\tau(\mathcal{R})\right) &=& \pi - \alpha(\mathcal{R}) \\ 
\beta\left(\tau(\mathcal{T})\right) &=& \pi - \beta(\mathcal{T}).
\end{eqnarray*}

Hence, with a suitable combination of $\sigma$ and $\tau$, we can ensure that $\theta_0 \leq \alpha(\mathcal{R}) \leq \theta_0 + \frac{\pi}{2}$ and $\frac{\pi}{3} \leq \beta(\mathcal{T}) \leq \frac{2\pi}{3}$.   

\begin{figure}[htbp]
\begin{center}
	\includegraphics[height = 2in]{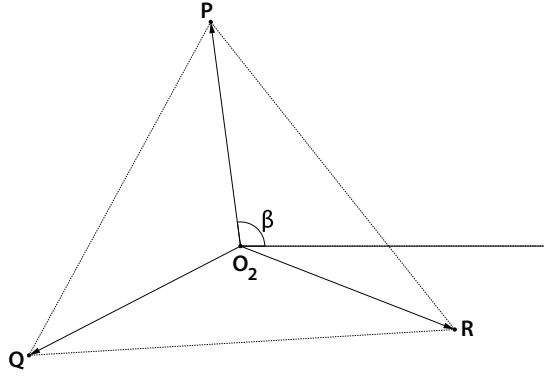}
	\caption{By rotating the vector $\Lambda A$ we can construct the whole triangle.}
	\label{coord2}
\end{center}
\end{figure}

\subsection{Area Estimation}
Here we use the inequalities from the Section 2 to bound the areas of the convex hull of $\mathcal{L}$, $\mathcal{B}$, $\mathcal{T}$ and $\mathcal{R}$. For simplicity, we shall refer to $\alpha(\mathcal{R})$ and $\beta(\mathcal{T})$ by simply $\alpha$ and $\beta$, respectively. The following lemmas provide basic lower bounds on the area of the convex hull of configurations involving line segment $\mathcal{L}$.

\begin{lem}
 $ \mu(\mathcal{L},\, \mathcal{R})\geq \frac{\sqrt{5}}{8} \sin{\alpha} $. \label{lemmaLR}
\end{lem}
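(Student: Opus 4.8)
The plan is to reduce the statement to a single application of Lemma~\ref{R}. Since $\mathcal{L}\cup\mathcal{R}$ contains the four points $E$, $F$ (the endpoints of $\mathcal{L}$) together with the two diagonally opposite corners $A$ and $C$ of the rectangle, monotonicity of the convex hull under inclusion gives $\mu(\mathcal{L},\mathcal{R})\geq\mu(EFAC)$. First I would invoke Lemma~\ref{R} on these four points to obtain $\mu(EFAC)\geq\frac{1}{2}|EF|\,h_{EF}(AC)=\frac{1}{2}h_{EF}(AC)$, using that $\mathcal{L}$ is a unit segment so $|EF|=1$.

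The remaining task is to evaluate $h_{EF}(AC)$ in terms of $\alpha$. Because $\mathcal{L}$ is horizontal, the height of any segment with respect to $EF$ is just its vertical extent, i.e. the absolute difference of the $y$-coordinates of its endpoints. From the parametrization $A=O_1+\frac{\sqrt5}{8}(\cos\alpha,\sin\alpha)$, together with the fact that $C$ is the opposite vertex of the rectangle, so that $C=O_1-\frac{\sqrt5}{8}(\cos\alpha,\sin\alpha)$, the diagonal vector is $\overrightarrow{CA}=\frac{\sqrt5}{4}(\cos\alpha,\sin\alpha)$. Its vertical component has magnitude $\frac{\sqrt5}{4}\lvert\sin\alpha\rvert$, so $h_{EF}(AC)=\frac{\sqrt5}{4}\lvert\sin\alpha\rvert$. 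Combining the two steps yields $\mu(\mathcal{L},\mathcal{R})\geq\frac{1}{2}\cdot\frac{\sqrt5}{4}\lvert\sin\alpha\rvert=\frac{\sqrt5}{8}\lvert\sin\alpha\rvert$, and since the normalization of the previous subsection restricts $\alpha$ to the interval $[\theta_0,\theta_0+\frac{\pi}{2}]\subset(0,\pi)$ we have $\sin\alpha\geq0$; dropping the absolute value gives exactly the claimed bound.

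There is no serious obstacle here; the argument turns on two bookkeeping facts that must be checked carefully. First, $\frac{\sqrt5}{8}$ is precisely the half-diagonal of the $\frac12\times\frac14$ rectangle $\mathcal{R}$, which is what guarantees that $A$ and $C$ are symmetric about the centroid $O_1$ and hence that $\overrightarrow{CA}$ doubles the given offset vector; this is also consistent with the relation $\arg(\overrightarrow{CA})=\alpha$ recorded earlier. Second, the choice of the \emph{diagonal} $AC$, rather than a side of $\mathcal{R}$, is exactly what makes the vertical extent equal to $\frac{\sqrt5}{4}\sin\alpha$ and therefore matches the constant in the statement; any other pair of vertices would yield a different, generally smaller, contribution. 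Verifying that $\sin\alpha$ is nonnegative on the admissible range of $\alpha$ is the only inequality to confirm, and it is immediate from $\theta_0=\arctan\frac12>0$ and $\theta_0+\frac{\pi}{2}<\pi$.
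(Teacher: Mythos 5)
Your argument is correct and is essentially the paper's own proof: both apply Lemma~\ref{R} to the four points $E,F,A,C$, use $|EF|=1$, and compute $h_{EF}(AC)=\frac{\sqrt5}{4}\sin\alpha$ from $\arg(\overrightarrow{CA})=\alpha$ and the half-diagonal length $\frac{\sqrt5}{8}$. You simply spell out the bookkeeping the paper leaves implicit.
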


\begin{proof}
From Lemma \ref{R} and because $EF$ is horizontal and $arg\left(\overrightarrow{C A}\right) = \alpha$, it follows that $\mu(\mathcal{L},\,\mathcal{R}) = \mu(ABCDEF) \geq \mu(ACEF) \geq \frac{1}{2}|EF|\, h_{EF}(AC) = \frac{\sqrt{5}}{8} \sin{\alpha}$.
\begin{flushright}
$ \Box $
\end{flushright}
\end{proof}

\begin{lem}
$  \mu(\mathcal{L},\,\mathcal{T})\geq \frac{1}{4} \max\left\{ \sin{(\beta - \frac{\pi}{6})},\,\sin{(\beta + \frac{\pi}{6})}\right\}. $
\end{lem}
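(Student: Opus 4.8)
The plan is to mirror the proof of Lemma~\ref{lemmaLR}, applying Lemma~\ref{R} to the unit segment $\mathcal{L}$ together with two of the three vertices of the V-shaped worm $\mathcal{T}$. Since the convex hull of $\{E,F,P,Q,R\}$ contains both the convex hull of $\{E,F,P,Q\}$ and that of $\{E,F,P,R\}$, we have $\mu(\mathcal{L},\mathcal{T})=\mu(QPREF)\geq\mu(EFPQ)$ and $\mu(QPREF)\geq\mu(EFPR)$; the final bound will come from taking the larger of the two resulting estimates.

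First I would apply Lemma~\ref{R} to the four points $E,F,P,Q$ to get $\mu(EFPQ)\geq\frac{1}{2}|EF|\,h_{EF}(PQ)$, and likewise to $E,F,P,R$ to get $\mu(EFPR)\geq\frac{1}{2}|EF|\,h_{EF}(PR)$. Because $\mathcal{L}$ is a unit segment we have $|EF|=1$, so both right-hand sides reduce to $\frac{1}{2}h_{EF}(\,\cdot\,)$.

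Next I would evaluate the two heights. Since $EF$ is horizontal, $h_{EF}(PQ)$ is simply the vertical separation of $P$ and $Q$, i.e.\ the absolute value of the $y$-component of $\overrightarrow{QP}$. As $\overrightarrow{QP}$ has argument $\beta-\frac{\pi}{6}$ and length $\frac{1}{2}$ (the side length of $\mathcal{T}$), this gives $h_{EF}(PQ)=\frac{1}{2}\left|\sin\!\left(\beta-\frac{\pi}{6}\right)\right|$; similarly, from $\arg(\overrightarrow{RP})=\beta+\frac{\pi}{6}$ we get $h_{EF}(PR)=\frac{1}{2}\left|\sin\!\left(\beta+\frac{\pi}{6}\right)\right|$. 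In the normalized range $\frac{\pi}{3}\leq\beta\leq\frac{2\pi}{3}$ we have $\beta-\frac{\pi}{6}\in[\frac{\pi}{6},\frac{\pi}{2}]$ and $\beta+\frac{\pi}{6}\in[\frac{\pi}{2},\frac{5\pi}{6}]$, so both sines are nonnegative and the absolute values can be dropped. Substituting back yields $\mu(EFPQ)\geq\frac{1}{4}\sin\!\left(\beta-\frac{\pi}{6}\right)$ and $\mu(EFPR)\geq\frac{1}{4}\sin\!\left(\beta+\frac{\pi}{6}\right)$, and taking the maximum of the two lower bounds gives the claimed inequality.

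The only point requiring care is the correct reading of the height in the definition above: one must confirm that for horizontal $EF$ the height of $PQ$ really is the vertical extent of the segment, and that the direction angles supplied by the parametrization of $\mathcal{T}$ translate into exactly the sine terms appearing in the statement. Checking that both sine arguments remain in a range where the sine is nonnegative is what permits stating the bound with $\sin$ rather than $|\sin|$; this sign bookkeeping is the main (though modest) obstacle, the rest being a direct reuse of Lemma~\ref{R}.
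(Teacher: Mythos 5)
Your proposal is correct and follows exactly the paper's argument: bound $\mu(\mathcal{L},\mathcal{T})$ below by $\max\{\mu(EFPQ),\mu(EFPR)\}$ and apply Lemma~\ref{R} to each quadruple, using $\arg(\overrightarrow{QP})=\beta-\frac{\pi}{6}$ and $\arg(\overrightarrow{RP})=\beta+\frac{\pi}{6}$ to evaluate the heights. The paper states this in one sentence; you have merely filled in the height computation and the sign check, both of which are right.
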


\begin{proof}
Note that $\mu(\mathcal{L}, \mathcal{T}) \geq \max\left\{ \mu(EFPQ),\, \mu(EFPR) \right\}$, and we also know that $\mu(EFPQ) \geq \frac{1}{4} \sin{(\beta - \frac{\pi}{6})}$ and $\mu(EFPR) \geq \frac{1}{4} \sin{(\beta + \frac{\pi}{6})}$.   
\begin{flushright}
$ \Box $
\end{flushright}
\end{proof} 

Next, we provide a lower bound for the area of the convex hull of configurations involving $\mathcal{L}$, $\mathcal{T}$ and $\mathcal{R}$ together.

\begin{prop}
$\mu(\mathcal{L},\,\mathcal{T},\,\mathcal{R}) \geq \frac{1}{8}\left(\sin(\alpha - \theta_0 + \frac{\pi}{2})+ \sin(\beta - \alpha + \theta_0 + \frac{\pi}{6})\right). $
\end{prop}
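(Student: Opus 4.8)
The plan is to apply the Proposition of Section 2 to the rectangle $\mathcal{R}=ABCD$ together with the four points $E,F$ (the endpoints of $\mathcal{L}$) and $R,P$ (one side of the triangle $\mathcal{T}$); the third vertex $Q$ is simply discarded, which is legitimate because $\mu(\mathcal{L},\mathcal{T},\mathcal{R})=\mu(ABCDEFPQR)\ge\mu(ABCDEFRP)$. First I would compute the two heights that enter inequality (\ref{main}). Since $\mathcal{L}$ is horizontal of unit length and line $BC$ makes angle $\alpha-\theta_0+\frac{\pi}{2}$ with the horizontal,
$$h_{BC}(EF)=|EF|\left|\sin\!\left(\alpha-\theta_0+\tfrac{\pi}{2}\right)\right|=\sin\!\left(\alpha-\theta_0+\tfrac{\pi}{2}\right),$$
the last equality because $\alpha-\theta_0\in[0,\frac{\pi}{2}]$. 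Likewise, since $\overrightarrow{RP}$ has argument $\beta+\frac{\pi}{6}$ and length $\frac12$, while line $AB$ is parallel to $\overrightarrow{CD}$ (argument $\alpha-\theta_0$),
$$h_{AB}(RP)=\tfrac12\left|\sin\!\left(\beta-\alpha+\theta_0+\tfrac{\pi}{6}\right)\right|=\tfrac12\sin\!\left(\beta-\alpha+\theta_0+\tfrac{\pi}{6}\right),$$
where the sign is fixed because this argument lies in $[0,\frac{5\pi}{6}]$ over the admissible ranges of $\alpha,\beta$. With $|AB|=\frac12$ and $|BC|=\frac14$ these give $\frac12|BC|\,h_{BC}(EF)=\frac18\sin(\alpha-\theta_0+\frac{\pi}{2})$ and $\frac12|AB|\,h_{AB}(RP)=\frac18\sin(\beta-\alpha+\theta_0+\frac{\pi}{6})$, i.e. exactly the two terms in the claimed bound.

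The core of the argument is the uniform estimate
$$\mu(ABCDEFRP)\ge|AB||BC|+\tfrac12\max\{h_{BC}(EF)-|AB|,\,0\}\,|BC|+\tfrac12\max\{h_{AB}(RP)-|BC|,\,0\}\,|AB|.$$
When both $h_{BC}(EF)>|AB|$ and $h_{AB}(RP)>|BC|$ this is precisely the Proposition, whose right-hand side expands to the displayed quantity. When exactly one hypothesis holds I would keep only the corresponding cap: if $h_{BC}(EF)>|AB|$, then after the same normalising reflections used in the Proposition the convex hull contains the rectangle together with the disjoint triangle $BEC$, whose height with base $BC$ is at least $h_{BC}(EF)-|AB|$, giving $\mu\ge|AB||BC|+\frac12(h_{BC}(EF)-|AB|)|BC|$; the other mixed case is symmetric, and if neither hypothesis holds we simply use $\mu\ge|AB||BC|$.

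It then remains to deduce the statement from this uniform estimate, which is a short piece of algebra: writing $H=h_{BC}(EF)$ and $G=h_{AB}(RP)$, one has $\max\{H-|AB|,0\}-H\ge-|AB|$ and $\max\{G-|BC|,0\}-G\ge-|BC|$, so
$$\tfrac12|BC|\bigl(\max\{H-|AB|,0\}-H\bigr)+\tfrac12|AB|\bigl(\max\{G-|BC|,0\}-G\bigr)\ge-|AB||BC|.$$
Adding $|AB||BC|$ shows the uniform estimate dominates $\frac12|BC|H+\frac12|AB|G$, which is the target sum. The main obstacle is not the trigonometry but the fact that the Proposition's hypotheses $h_{BC}(EF)>|AB|$ and $h_{AB}(RP)>|BC|$ genuinely fail on part of the admissible range (for instance $h_{BC}(EF)=\cos(\alpha-\theta_0)\le\frac12$ once $\alpha-\theta_0\ge\frac{\pi}{3}$); disposing of those degenerate configurations cleanly — through the single-cap bound and the $\max$-inequality above rather than a direct appeal to the Proposition — is where the care is needed.
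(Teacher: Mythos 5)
Your proof is correct and its core is the same as the paper's: discard the vertex $Q$ of $\mathcal{T}$, apply Proposition~\ref{main} to the rectangle $ABCD$ and the four points $E,F,P,R$, observe that the right-hand side of inequality~\ref{main} telescopes to $\frac12 h_{BC}(EF)\,|BC|+\frac12 h_{AB}(PR)\,|AB|$, and evaluate the two heights from the arguments of $\overrightarrow{BC}$ and $\overrightarrow{RP}$. Where you genuinely differ is in noticing that Proposition~\ref{main} carries the hypotheses $h_{BC}(EF)>|AB|$ and $h_{AB}(PR)>|BC|$, and that these can fail on the stated domain: $h_{BC}(EF)=\cos(\alpha-\theta_0)\le\frac12=|AB|$ once $\alpha\ge\theta_0+\frac{\pi}{3}$, and $h_{AB}(PR)=\frac12\sin(\beta-\alpha+\theta_0+\frac{\pi}{6})$ can drop to $0$ at the corner $\beta=\frac{\pi}{3}$, $\alpha=\theta_0+\frac{\pi}{2}$. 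The paper's proof applies Proposition~\ref{main} without comment, so as written it only covers the subregion where both hypotheses hold (which happens to contain the region where this bound is actually invoked in Proposition~\ref{result}, Case 4, so the final numerical result is unaffected). Your uniform estimate with the $\max\{\,\cdot\,,0\}$ caps, together with the observation that $\frac12|BC|\max\{H-|AB|,0\}+\frac12|AB|\max\{G-|BC|,0\}+|AB||BC|\ge\frac12|BC|H+\frac12|AB|G$ always, and the single-cap/no-cap fallbacks (each of which is justified by the same strip argument the paper already uses in Cases 2--4 of Proposition~\ref{main}), makes the stated inequality hold unconditionally on the whole domain. This is a real improvement in rigor over the paper's one-line proof, at the cost of a short extra case analysis.
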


\begin{proof}
First notice that $\mu(\mathcal{L},\,\mathcal{T},\,\mathcal{R}) \geq \mu(ABCDEFPR)$, then we apply Proposition \ref{main} to obtain
$$\mu(ABCDEFPR) \geq \frac{1}{2}\left(h_{BC}(EF)\, |BC| + h_{AB}(PR)\, |AB|\right).$$

Since $arg\left(\overrightarrow{B C}\right) = \alpha - \theta_0 + \frac{\pi}{2}$, we have $h_{BC}(EF) = \sin{(\alpha - \theta_0 + \frac{\pi}{2})}$. Similarly, we obtain 
$h_{AB}(PR) = \frac{1}{2} \sin{(\beta - \alpha + \theta_0 + \frac{\pi}{6})}$, and therefore get the desired result.
\begin{flushright}
$ \Box $
\end{flushright} 
\end{proof}

Lastly, we replace $\mathcal{T}$ with the broadworm $\mathcal{B}$ and denote its breadth by $b_0$.

\begin{prop}
$\mu(\mathcal{L},\,\mathcal{R},\,\mathcal{B}) \geq \frac{1}{4}\left(\frac{1}{2}\sin(\alpha - \theta_0 + \frac{\pi}{2})+b_0\right) $. \label{propLRB}
\end{prop}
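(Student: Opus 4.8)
The plan is to follow the proof of the preceding proposition verbatim in structure, replacing the two triangle vertices $P$ and $R$ by a carefully chosen pair of points of the broadworm $\mathcal{B}$, and substituting the breadth $b_0$ for the explicit triangle height. The point of the substitution is that the term coming from $\mathcal{R}$ and $\mathcal{L}$ is unchanged, while the term that previously measured the spread of the triangle transverse to $AB$ is now controlled by the defining width property of $\mathcal{B}$.

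First I would fix the unit direction $\vec{n}$ perpendicular to the side $AB$ of the rectangle $\mathcal{R}$, and choose $P,Q \in \mathcal{B}$ to be points attaining the maximum and the minimum, respectively, of the projection of $\mathcal{B}$ onto $\vec{n}$. Because $P$ and $Q$ lie in $\mathcal{B}$, the convex hull of $\{A,B,C,D,E,F,P,Q\}$ is contained in the convex hull of $\mathcal{L}\cup\mathcal{R}\cup\mathcal{B}$, so $\mu(\mathcal{L},\mathcal{R},\mathcal{B}) \geq \mu(ABCDEFPQ)$. I would then apply Proposition \ref{main} to the rectangle $ABCD$ and the four points $E,F,P,Q$, whose right-hand side simplifies (exactly as in the previous proof) to $\frac12\bigl(h_{BC}(EF)\,|BC| + h_{AB}(PQ)\,|AB|\bigr)$.

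The key step is the identification $h_{AB}(PQ) \geq b_0$. By the definition of $h_{AB}(PQ)$ as the distance between the two lines parallel to $AB$ through $P$ and through $Q$, and by the choice of $P,Q$ as the extreme points of $\mathcal{B}$ in the direction $\vec{n}$, the quantity $h_{AB}(PQ)$ is precisely the width of $\mathcal{B}$ in the direction $\vec{n}$; since every directional width of the broadworm is at least its breadth $b_0 \approx 0.4389$, we obtain $h_{AB}(PQ)\geq b_0$. Inserting $h_{BC}(EF) = \sin(\alpha-\theta_0+\tfrac{\pi}{2})$ (as established before), together with $|BC|=\tfrac14$, $|AB|=\tfrac12$, and $h_{AB}(PQ)\geq b_0$, yields
\[
\mu(\mathcal{L},\mathcal{R},\mathcal{B}) \geq \tfrac18\sin(\alpha-\theta_0+\tfrac{\pi}{2}) + \tfrac14 b_0 = \tfrac14\Bigl(\tfrac12\sin(\alpha-\theta_0+\tfrac{\pi}{2}) + b_0\Bigr),
\]
which is the claimed bound.

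The main obstacle I anticipate is precisely this width argument: unlike the triangle $\mathcal{T}$, the broadworm is not prescribed by named vertices, so I cannot read off an angle-dependent height and must instead argue from the characterization of $\mathcal{B}$ as the unit worm whose minimum width over all directions is $b_0$. Choosing $P,Q$ as the $\vec{n}$-extreme points of $\mathcal{B}$ converts this global property into the single directional inequality I need. A secondary point worth verifying, as in the previous proposition, is that the hypotheses $h_{BC}(EF)>|AB|$ and $h_{AB}(PQ)>|BC|$ of Proposition \ref{main} hold; the latter follows immediately from $b_0>\tfrac14$, while the former holds in the relevant range of $\alpha$ as used earlier.
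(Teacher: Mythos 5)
Your proposal is correct and follows essentially the same route as the paper: choose two points of $\mathcal{B}$ extremal in the direction perpendicular to $AB$ so that $h_{AB}(PQ)\geq b_0$ by the breadth property, then apply Proposition \ref{main} exactly as in the preceding proposition. Your explicit check of the hypotheses of Proposition \ref{main} is a welcome addition that the paper omits, but the argument is otherwise identical.
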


\begin{proof}
By the definition of breath, there exist points $S$ and $T$ on $\mathcal{B}$ such that $h_{AB}(ST)\geq b_0$. Again, we apply Proposition \ref{main} to obtain
$$\mu(\mathcal{L},\,\mathcal{T},\,\mathcal{B}) \geq \mu(ABCDEFST) \geq \frac{1}{8}\sin(\alpha - \theta_0 + \frac{\pi}{2}) + \frac{b_0}{4}$$
and complete the proof.
\begin{flushright}
$ \Box $
\end{flushright}
\end{proof}

Now we can combine all the lower bounds together to try to minimize $\mu(\mathcal{L},\,\mathcal{R},\,\mathcal{T},\,\mathcal{B})$. We define the following functions:
\begin{itemize}
\item {$p(\alpha)=\frac{\sqrt{5}}{8} \sin{\alpha} $}
\item {$q(\beta)= \frac{1}{4} \max\left\{ \sin{(\beta - \frac{\pi}{6})},\, \sin{(\beta + \frac{\pi}{6})}\right\} $}
\item {$f(\alpha,\beta)= \frac{1}{8}\left(\sin(\alpha - \theta_0 + \frac{\pi}{2})+ \sin(\beta - \alpha + \theta_0 + \frac{\pi}{6})\right)$}
\item {$g(\alpha)= \frac{1}{4}\left(\frac{1}{2}\sin(\alpha - \theta_0 + \frac{\pi}{2})+b_0\right)$}
\item {$F(\alpha,\, \beta) = \max\left\{p(\alpha),\, q(\beta),\,f(\alpha,\beta),\, g(\alpha)\right\}$}
\end{itemize}
It follows immediately that $\mu(\mathcal{L},\,\mathcal{R},\, \mathcal{T},\,\mathcal{B}) \geq F(\alpha,\, \beta)$, and we will find a lower bound for $F(\alpha,\, \beta)$ on the domain $\theta_0 \leq \alpha \leq \theta_0 + \frac{\pi}{2}$ and $\frac{\pi}{3} \leq \beta \leq \frac{2\pi}{3}$.

\begin{prop} $F(\alpha,\,\beta)\geq 0.232239$ on the domain $[\theta_0, \theta_0 + \frac{\pi}{2}] \times [\frac{\pi}{3} , \frac{2\pi}{3}]$. \label{result}
\end{prop}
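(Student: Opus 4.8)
\noindent\emph{Proof proposal.}\quad The plan is to argue by contradiction: suppose $F(\alpha,\beta)<c$ at some point of the domain, where $c=0.232239$, and derive an impossibility. By the definition of $F$ this forces $p(\alpha)<c$, $g(\alpha)<c$, $q(\beta)<c$ and $f(\alpha,\beta)<c$ simultaneously. The strategy is to use the three one-variable bounds $p$, $g$, $q$ to trap $(\alpha,\beta)$ in a small rectangle, and then to show that $f$ is necessarily at least $c$ there.

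First I would confine $\alpha$. On $[\theta_0,\frac{\pi}{2}]$ the function $p(\alpha)=\frac{\sqrt5}{8}\sin\alpha$ is increasing, while on $[\frac{\pi}{2},\theta_0+\frac{\pi}{2}]$ it never drops below $p(\theta_0+\frac{\pi}{2})=\frac14>c$; hence $p(\alpha)<c$ forces $\alpha<\alpha_1:=\arcsin\!\big(\tfrac{8c}{\sqrt5}\big)\approx0.979$. Since $\alpha-\theta_0+\frac{\pi}{2}$ runs through $[\frac{\pi}{2},\pi]$, the function $g$ is decreasing on the whole domain, so $g(\alpha)<c$ forces $\alpha>\alpha_2$, where $g(\alpha_2)=c$ and $\alpha_2\approx0.663$ (this is the only place the value $b_0$ enters). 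Thus $\alpha\in(\alpha_2,\alpha_1)$. Next I would confine $\beta$: on $[\frac{\pi}{3},\frac{2\pi}{3}]$ one has $q(\beta)=\frac14\sin(\beta+\frac{\pi}{6})$ for $\beta\le\frac{\pi}{2}$ and $q(\beta)=\frac14\sin(\beta-\frac{\pi}{6})$ for $\beta\ge\frac{\pi}{2}$, so $q$ is symmetric about and minimized at $\beta=\frac{\pi}{2}$; consequently $q(\beta)<c$ traps $\beta$ in an interval $(\beta_1,\beta_2)$ symmetric about $\frac{\pi}{2}$, with $\beta_2=\frac{\pi}{6}+\arcsin(4c)\approx1.715$ and $\beta_1=\pi-\beta_2$. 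It now suffices to contradict $f<c$ by proving $f(\alpha,\beta)\ge c$ on $R:=[\alpha_2,\alpha_1]\times[\beta_1,\beta_2]$.

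For this final step, write $f=\frac18(\sin A+\sin B)$ with $A=\alpha-\theta_0+\frac{\pi}{2}$ and $B=\beta-\alpha+\theta_0+\frac{\pi}{6}$, so that $f_\alpha=\frac18(\cos A-\cos B)$ and $f_\beta=\frac18\cos B$. Throughout $R$ we have $\alpha>\theta_0$, hence $A>\frac{\pi}{2}$ and $\cos A\ne0$; an interior critical point would require $\cos B=0$ and $\cos A=\cos B$, which is impossible, so the minimum of $f$ on $R$ is attained on $\partial R$. On each edge $f$ becomes a single unimodal sinusoid (or a sum of two that is unimodal on so short an interval), so its minimum occurs at a corner; comparing the four corners shows the minimum sits at $(\alpha_1,\beta_2)$, the corner where $p=q=c$. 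One then checks $f(\alpha_1,\beta_2)\ge c$, completing the contradiction. To make the corner value explicit I would use $\cos\theta_0=\frac{2}{\sqrt5}$ and $\sin\theta_0=\frac{1}{\sqrt5}$ to rewrite $\sin A=\cos(\alpha-\theta_0)=\frac{2\cos\alpha+\sin\alpha}{\sqrt5}$, linking $f$ back to $p$.

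The hard part is precisely this last verification: the bound is essentially tight, since $f(\alpha_1,\beta_2)$ exceeds $c$ only by something of order $10^{-4}$. Crude estimates of $f$ over $R$ will therefore not close the gap, and the value $c=0.232239$ is in effect pinned by the self-consistent relation $f(\alpha_1(c),\beta_2(c))=c$, in which $\alpha_1$ and $\beta_2$ are themselves functions of $c$ through $p$ and $q$. Making the three confinement steps and the corner evaluation rigorous thus demands carefully certified numerical bounds on the various $\arcsin$ values and on $b_0$, together with a clean justification that $f$ is unimodal along each edge of $R$, so that its minimum really is a corner value.
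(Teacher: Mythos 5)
Your proposal is correct and takes essentially the same route as the paper: confining $(\alpha,\beta)$ via $p<c$, $g<c$, $q<c$ is exactly the contrapositive of the paper's Cases 1--3 (with matching cutoffs $\alpha_1\approx0.98069$, $\alpha_2\approx0.66372$, $\beta_2\approx\frac{\pi}{2}+0.14439$), and your final step---no interior critical point of $f$, minimum at the corner $(\alpha_1,\beta_2)$---is the paper's Case 4, where the corner value is $0.232239210$. One small correction: the margin at that worst corner is about $2\times10^{-7}$, not $10^{-4}$, so the verification is even more delicate than you suggest.
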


\begin{proof} We consider the possible values of $F(\alpha, \beta)$ in four cases.

\noindent\textbf{Case 1:} $0.980693572 < \alpha \leq \theta_0 + \frac{\pi}{2}$ 

Clearly we have $\frac{\sqrt{5}}{8}\sin{\alpha} > 0.23223900008$.
\newline\newline
\noindent\textbf{Case 2:} $\theta_0 \leq \alpha < 0.663720973$

We have $\frac{1}{4}\left(\frac{1}{2}\sin(\alpha - \theta_0 + \frac{\pi}{2})+b_0\right) > 0.232239000003$.
\newline\newline
\noindent\textbf{Case 3:} $\frac{\pi}{3} \leq \beta < \frac{\pi}{2} - 0.1443850667\,$ or $\,\frac{\pi}{2} + 0.1443850667 < \beta \leq \frac{2\pi}{3}$

We have $\frac{1}{4}\sin(\beta- \frac{\pi}{6}) > 0.232239000012$ when $\frac{\pi}{3} \leq \beta < \frac{\pi}{2} - 0.1443850667$, and $\frac{1}{4}\sin(\beta+\frac{\pi}{6}) > 0.232239000012$ when $\frac{\pi}{2} + 0.1443850667 < \beta \leq \frac{2\pi}{3}$.
\newline\newline
\noindent\textbf{Case 4:} $0.663720972 \leq \alpha \leq 0.980693573\,$ and $\,\frac{\pi}{2} - 0.1443850668 \leq \beta \leq \frac{\pi}{2} + 0.1443850668$

Consider $f(\alpha,\,\beta)$ on this domain, which is a product of closed and bounded intervals. We can check that there is no local minimum except possibly at the corners and compute 
\begin{align*}
	f(0.663720972,\,\pi/2 - 0.1443850668) &= 0.245506 \\
	f(0.663720972,\,\pi/2 + 0.1443850668) &= 0.234071 \\
	f(0.980693573,\,\pi/2 - 0.1443850668) &= 0.232475 \\
	f(0.980693573,\,\pi/2 + 0.1443850668) &= 0.232239210
\end{align*}
Thus, $f(\alpha,\,\beta) \geq 0.232239210$ on this domain

Therefore, $F(\alpha,\,\beta)= \max\left\{p(\alpha), q(\beta),f(\alpha,\,\beta),g(\alpha)\right\} \geq 0.232239$
\begin{flushright}
$ \Box $
\end{flushright}

\end{proof}

Hence, we have established a new lower bound for the Moser's problem.

\section{Lower bound for a universal cover of unit closed curves}

We now consider a universal cover for any unit closed curve. Denote the segment of length $\frac{1}{2}$, the circle with unit circumference, and a square of side length $\frac{1}{4}$ by $\mathcal{L}$, $\mathcal{C}$, and $\mathcal{R}$, respectively. We parameterize the orientation of $\mathcal{L}$ and $\mathcal{R}$ in essentially the same way as before.

The circle $\mathcal{C}$ imitates the role of the broadworm as the $\mathcal{C}$ has width $\frac{1}{\pi}$ in every direction. Additionally, we can assume that $\frac{\pi}{4} \leq \alpha \leq \frac{\pi}{2}$ because of the symmetry of the square. Then we have the following
\begin{prop}
$i)$ $\mu(\mathcal{L},\mathcal{R}) \geq \frac{\sqrt{2}}{16} \sin(\alpha)\,$ and $\,ii)$ $\mu(\mathcal{L},\mathcal{R},\mathcal{C}) \geq \frac{1}{8}\left(\frac{1}{2}\sin(\alpha + \frac{\pi}{4}) + \frac{1}{\pi}\right)$
\end{prop}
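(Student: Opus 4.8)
The plan is to mirror the arguments of Lemma~\ref{lemmaLR} and Proposition~\ref{propLRB} verbatim, adjusting only the numerical parameters for the new figures: the segment $\mathcal{L}$ now has length $\frac12$, the square $\mathcal{R}$ has $|AB|=|BC|=\frac14$ (so its half-diagonal is $\frac{\sqrt2}{8}$ and $\theta_0=\frac\pi4$), and the unit-circumference circle $\mathcal{C}$ replaces the broadworm, contributing width $\frac1\pi$ in every direction. Carrying over the parameterization of Section~3, we have $\arg(\overrightarrow{CA})=\alpha$ and $\arg(\overrightarrow{CD})=\alpha-\frac\pi4$, so the diagonal $AC$ has length $\frac{\sqrt2}{4}$, while the sides $BC$ and $AD$ point in direction $\alpha-\theta_0+\frac\pi2=\alpha+\frac\pi4 \pmod\pi$.

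For part $i)$, I would first discard all but the two opposite vertices $A,C$ of the square, so that $\mu(\mathcal{L},\mathcal{R})=\mu(ABCDEF)\geq\mu(ACEF)$. Applying Lemma~\ref{R} to the points $A,C,E,F$ gives $\mu(ACEF)\geq\frac12|EF|\,h_{EF}(AC)$. Since $EF$ is horizontal of length $\frac12$ and $\overrightarrow{CA}$ has argument $\alpha$ and length $\frac{\sqrt2}{4}$, the height is $h_{EF}(AC)=\frac{\sqrt2}{4}\sin\alpha$, and substituting yields $\mu(\mathcal{L},\mathcal{R})\geq\frac12\cdot\frac12\cdot\frac{\sqrt2}{4}\sin\alpha=\frac{\sqrt2}{16}\sin\alpha$.

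For part $ii)$, I would take the two points $S,T$ on $\mathcal{C}$ to be the endpoints of the diameter perpendicular to $AB$, so that $h_{AB}(ST)=\frac1\pi$, and then apply Proposition~\ref{main} to the eight points $A,B,C,D,E,F,S,T$, using $\mu(\mathcal{L},\mathcal{R},\mathcal{C})\geq\mu(ABCDEFST)$. Because $BC$ has direction $\alpha+\frac\pi4$ and $EF$ is horizontal of length $\frac12$, we get $h_{BC}(EF)=\frac12\sin(\alpha+\frac\pi4)$. Feeding $h_{BC}(EF)$, $h_{AB}(ST)=\frac1\pi$, and $|AB|=|BC|=\frac14$ into inequality~\ref{main} and simplifying, the purely rectangular constant terms cancel (here $\frac1{16}-\frac1{32}-\frac1{32}=0$, exactly as the terms $\frac18-\frac1{16}-\frac1{16}$ cancelled in Proposition~\ref{propLRB}), leaving $\mu(ABCDEFST)\geq\frac1{16}\sin(\alpha+\frac\pi4)+\frac1{8\pi}=\frac18\bigl(\frac12\sin(\alpha+\frac\pi4)+\frac1\pi\bigr)$, which is the claim.

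The one genuine point to verify — and the step most likely to hide an error — is that the hypotheses of Proposition~\ref{main} hold on the reduced domain $\frac\pi4\leq\alpha\leq\frac\pi2$. Here $h_{AB}(ST)=\frac1\pi\approx0.318>\frac14=|BC|$, and on this range $\alpha+\frac\pi4\in[\frac\pi2,\frac{3\pi}4]$, so $h_{BC}(EF)=\frac12\sin(\alpha+\frac\pi4)\geq\frac{\sqrt2}{4}\approx0.354>\frac14=|AB|$. Hence both conditions $h_{BC}(EF)>|AB|$ and $h_{AB}(ST)>|BC|$ are met, and the appeal to Proposition~\ref{main} is legitimate.
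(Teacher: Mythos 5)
Your proposal is correct and follows exactly the route the paper intends: the paper's own proof is just the one-line remark that the argument is identical to Lemma~\ref{lemmaLR} and Proposition~\ref{propLRB} with the new parameters, and your computation (half-diagonal $\frac{\sqrt2}{8}$, $\theta_0=\frac\pi4$, width $\frac1\pi$, cancellation of the rectangular constants) carries that out faithfully. Your explicit check that the hypotheses $h_{BC}(EF)>|AB|$ and $h_{AB}(ST)>|BC|$ of Proposition~\ref{main} hold on $[\frac\pi4,\frac\pi2]$ is a detail the paper omits, and it is worth having.
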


\begin{proof} The prove is exactly the same as in Lemma \ref{lemmaLR} and Proporsition \ref{propLRB}.
\end{proof}

We now define
$$G(\alpha) = \max \left\{ \frac{\sqrt{2}}{16} \sin(\alpha),\, \frac{1}{8}\left(\frac{1}{2}\sin(\alpha + \frac{\pi}{4}) + \frac{1}{\pi}\right) \right\}$$

Then we have,

\begin{prop}
$G(\alpha) \geq 0.0879873$ on the domain $[\frac{\pi}{4},\,\frac{\pi}{2}]$.
\end{prop}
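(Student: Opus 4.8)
The plan is to show that the maximum of two sinusoidal functions is bounded below by $0.0879873$ on $[\frac{\pi}{4}, \frac{\pi}{2}]$, mirroring exactly the single-variable case analysis that produced the bound in Proposition \ref{result}. Write $a(\alpha) = \frac{\sqrt{2}}{16}\sin(\alpha)$ and $b(\alpha) = \frac{1}{8}\left(\frac{1}{2}\sin(\alpha + \frac{\pi}{4}) + \frac{1}{\pi}\right)$, so $G(\alpha) = \max\{a(\alpha), b(\alpha)\}$. The first function $a$ is increasing on $[\frac{\pi}{4},\frac{\pi}{2}]$, while $b$ attains its maximum at $\alpha = \frac{\pi}{4}$ (where $\alpha + \frac{\pi}{4} = \frac{\pi}{2}$) and decreases thereafter. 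Since one summand rises and the other falls, the maximum of the two is minimized near the crossing point, and the strategy is simply to locate a threshold $\alpha^\ast$ and bound each piece on the appropriate subinterval.

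First I would split the domain at a suitable value $\alpha^\ast$. On the upper subinterval $[\alpha^\ast, \frac{\pi}{2}]$, I use monotonicity of $a$: since $\sin$ is increasing there, $a(\alpha) \geq a(\alpha^\ast) = \frac{\sqrt{2}}{16}\sin(\alpha^\ast)$, and I choose $\alpha^\ast$ so that this already exceeds $0.0879873$. On the lower subinterval $[\frac{\pi}{4}, \alpha^\ast]$, I bound $b$ from below: because $\alpha + \frac{\pi}{4}$ ranges over $[\frac{\pi}{2}, \alpha^\ast + \frac{\pi}{4}]$ and $\sin$ is decreasing past $\frac{\pi}{2}$, the minimum of $b$ on this subinterval occurs at the right endpoint, giving $b(\alpha) \geq b(\alpha^\ast) = \frac{1}{8}\left(\frac{1}{2}\sin(\alpha^\ast + \frac{\pi}{4}) + \frac{1}{\pi}\right)$, which I also arrange to exceed $0.0879873$. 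Combining the two cases yields $G(\alpha) = \max\{a(\alpha), b(\alpha)\} \geq 0.0879873$ on the whole interval.

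The only genuine work is selecting the splitting point $\alpha^\ast$ and verifying the two numerical inequalities. The threshold is determined by the crossing condition $a(\alpha^\ast) = b(\alpha^\ast)$, or more practically by solving $\frac{\sqrt{2}}{16}\sin(\alpha^\ast) = 0.0879873$, which fixes $\alpha^\ast$, and then checking that $b(\alpha^\ast) \geq 0.0879873$ at that same point. The main (and essentially only) obstacle is ensuring these two evaluations are consistent, namely that at the chosen $\alpha^\ast$ both the increasing branch and the decreasing branch clear the target value simultaneously; this is guaranteed precisely because $0.0879873$ is taken to be at or just below the common value at the crossing point of $a$ and $b$. Since both functions are smooth and monotone on their respective pieces, no interior critical points intervene, and the argument reduces to the two endpoint computations exactly as in the cases of Proposition \ref{result}.
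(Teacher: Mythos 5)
Your proposal is correct and follows essentially the same route as the paper: the paper likewise splits the domain at $\alpha^\ast \approx 1.47550402$ and verifies $\frac{\sqrt{2}}{16}\sin(\alpha) > 0.0879873$ on the upper piece and $\frac{1}{8}\left(\frac{1}{2}\sin(\alpha + \frac{\pi}{4}) + \frac{1}{\pi}\right) > 0.0879873$ on the lower piece. Your explicit monotonicity justification for reducing each piece to a single endpoint evaluation is a welcome addition the paper leaves implicit.
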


\begin{proof} We divide the domain into two parts

\noindent\textbf{Case 1:} When $1.4755040221 < \alpha \leq \frac{\pi}{2}$, we see that $\frac{\sqrt{2}}{16} \sin(\alpha) > 0.08798734$

\noindent\textbf{Case 2:} When $\frac{\pi}{4} \leq \alpha < 1.4755040222$, we have $\frac{1}{8}\left(\frac{1}{2}\sin(\alpha + \frac{\pi}{4}) + \frac{1}{\pi}\right) >0.08798739$ 
\end{proof}

Hence we establish a lower bound for a universal cover of unit closed curves. 

\noindent\textit{Remark}: In this case, adding an equilateral triangle of side $\frac{1}{3}$ does not improve the lower bound.

\textsc{Tirasan Khandhawit} 

\textsc{Department of Mathematics, M.I.T., Cambridge, MA 02139}

E-mail address: \texttt{tirasan@math.mit.edu}

\textsc{Dimitrios Pagonakis} 

\textsc{22 Finikos Str. Iraklion Crete, Greece 71305}

E-mail address: \texttt{jimicemang@hotmail.com}

\textsc{Sira Sriswasdi} 

\textsc{Graduate Group in Genomics and Computational Biology,}

\textsc{University of Pennsylvania
Philadelphia, PA 19104}

E-mail address: \texttt{sirasris@mail.med.upenn.edu}

\end{document}